\newtheorem{theorem}{Theorem}
\title{On the Expected Duration of a Generalized Bingo Game}
\author{Vu Phan and Ilie Ugarcovici}
\address{DePaul University, Chicago, IL 60614, USA}
\email{vphan8@depaul.edu,iugarcov@depaul.edu}
\date{November 25, 2025}
\begin{document}

\begin{abstract}
We investigate the expected number of calls required to achieve Bingo in a generalized $(n,m)$-Bingo game, where each $n \times n$ card is filled by sampling $n$ numbers from $m$ possible values per column. Using the inclusion-exclusion principle, we derive exact formulas for the probability distribution and the expected game length. Our main theoretical result proves that the expected number of calls is a linear function of $m$. 
\end{abstract}
\maketitle

\section{Introduction}

Bingo is a fun and entertaining family game. There are multiple players, each with a bingo card as illustrated in Figure~\ref{fig:card} and a caller whose role is to pick numbers out of a bag and say them out loud. If the called number exists on the card, the player will mark it, and if the marked numbers complete a line which is either horizontal, vertical, or diagonal, Bingo is achieved. The game can accommodate many simultaneous players. However, when there are many players, the game ends quickly, since there is a higher chance that the called numbers will land on a line on one of the Bingo cards. Interestingly enough, when there are many people in a Bingo game, it is more likely that the winning card has a completed row than a column (see \cite{Benjamin2017BingoParadox}).

\begin{figure}[htb]
    \centering
    \includegraphics[width=0.4\linewidth]{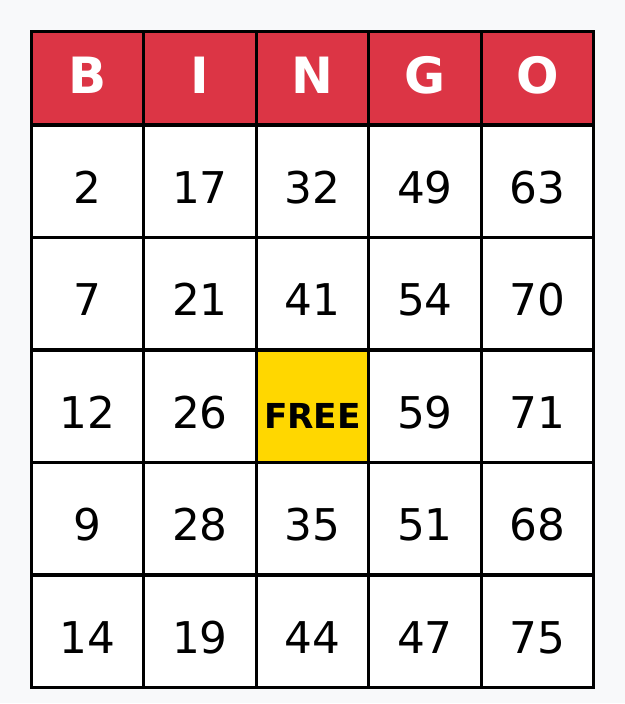}
    \caption{A $5 \times 5$ Bingo Card}
    \label{fig:card}
\end{figure}

In this paper, we investigate how the size $n\times n$ of the Bingo card and the range of call numbers affect the length of the game, in particular the average number of calls required to achieve Bingo. We prove that the expected number of calls to achieve Bingo is linear with respect to the column range.

\section{Mathematical Framework}
The classical Bingo game involves a $5 \times 5$ card with a ``free space" in the center. Each column contains numbers from specific ranges: the first column contains numbers from the interval 1--15, the second column from 16--30, etc. We generalize this to an $(n,m)$-Bingo game, where a card has size $n \times n$ (with $n$ odd) and numbers in each column are selected from $m$ possible values ($m\ge n)$: column $j$ ($1\le j\le n$) contains numbers from the interval $[m\cdot (j-1)+1, m\cdot j]$. The standard Bingo card corresponds to $(n,m) = (5,15)$.

Our primary objective is to determine the expected number of calls required to complete a Bingo card, completing a full row, column, or diagonal. In the article \cite{2002bingo}, D. Agard and M. Shackleford used the inclusion-exclusion principle to compute the cumulative density function (CDF) exactly. To employ this method for an $(n,m)$-Bingo card we need to consider all Bingo winning situations that can occur from $2n + 2$ possible winning patterns: $n$ rows, $n$ columns, and $2$ diagonals.

Let $\mathcal{L}$ denote the set of all $2n+2$ possible winning patterns and, for any subset $\mathbf{X} \subseteq \mathcal{L}$, we define:
\begin{itemize}
\item $|\mathbf{X}|$ = number of Bingo patterns in $\mathbf{X}$,
\item $m(\mathbf{X})$ = number of distinct squares covered by union of patterns in~$\mathbf{X}$.
\end{itemize}

The cumulative probability distribution for a Bingo event, $\mathbf{B}$,  is
\[
P(\mathbf{B} \leq k ) = P\left(\bigcup_{L\in\mathcal{L}}(L\leq k)\right).
\]
By the inclusion-exclusion principle, the above formula can be rewritten as:
\begin{align*}
P(\mathbf{B} \leq k) &= \sum_{\emptyset \neq \mathbf{X} \subseteq \mathcal{L}}(-1)^{|\mathbf{X}|+1}P\left(\bigcap_{L \in \mathbf{X}}(L\leq k)\right).
\end{align*}
Here, $P(\bigcap_{L \in \mathbf{X}}(L\leq k))$ is the probability that all the squares in the union of the lines in $\mathbf{X}$ are among the $k$ called numbers, thereby 
\begin{align*}
P(\mathbf{B} \leq k) &=\sum_{\emptyset \neq \mathbf{X} \subseteq \mathcal{L}}
(-1)^{|\mathbf{X}|+1}\frac{\binom{mn-m(\mathbf{X})}{k-m(\mathbf{X})}}{\binom{mn}{k}} \\
&= \sum_{\emptyset \neq \mathbf{X} \subseteq \mathcal{L}}(-1)^{|\mathbf{X}|+1}\frac{\binom{k}{m(\mathbf{X})}}{\binom{mn}{m(\mathbf{X})}},
\end{align*}
where the last equality follows from:
\begin{align*}
\frac{\binom{mn-m(\mathbf{X})}{k-m(\mathbf{X})}}{\binom{mn}{k}}
&= \frac{(mn-m(\mathbf{X}))! \cdot k!}{(k-m(\mathbf{X}))! \cdot (mn)!} \\
&= \frac{(mn-m(\mathbf{X}))! \cdot k! \cdot m(\mathbf{X})!}{(k-m(\mathbf{X}))! \cdot (mn)! \cdot m(\mathbf{X})!} \\
&= \frac{\frac{k!}{(k-m(\mathbf{X}))!\cdot m(\mathbf{X})!}}
        {\frac{(mn)!}{(mn-m(\mathbf{X}))!\cdot m(\mathbf{X})!}}\\
&= \frac{\binom{k}{m(\mathbf{X})}}{\binom{mn}{m(\mathbf{X})}}.
\end{align*}
Now we compute the probability mass function (PMF): 
\begin{align*}
P(\mathbf{B}=k) &= P(\mathbf{B} \leq k) - P(\mathbf{B} \leq k-1) \\
&= \sum_{\emptyset \neq \mathbf{X} \subseteq \mathcal{L}}(-1)^{|\mathbf{X}|+1}\frac{\binom{k}{m(\mathbf{X})}-\binom{k-1}{m(\mathbf{X})}}{ \binom{mn}{m(\mathbf{X})}}\\
&= \sum_{\emptyset \neq \mathbf{X} \subseteq \mathcal{L}}(-1)^{|\mathbf{X}|+1}\frac{ \binom{k-1}{m(\mathbf{X})-1}}{ \binom{mn}{m(\mathbf{X})}} \\
&= \sum_{\emptyset \neq \mathbf{X} \subseteq \mathcal{L}}(-1)^{|\mathbf{X}|+1}\frac{m(\mathbf{X}) \cdot \binom{k}{m(\mathbf{X})}}{k \cdot \binom{mn}{m(\mathbf{X})}},
\end{align*}
where the second equation is derived using the famous binomial identity: $\binom{k}{n}-\binom{k-1}{n}=\binom{k-1}{n-1}$. Thus the expected value or the expected number of calls to complete a Bingo game is:
\begin{align}\label{eq:expect}
\mathbb{E}[\mathbf{B}] = \sum_{k=1}^{mn} k \cdot P(\mathbf{B}=k) = \sum_{k=1}^{mn} \sum_{\emptyset \neq \mathbf{X} \subseteq \mathcal{L}}(-1)^{|\mathbf{X}|+1} \frac{m(\mathbf{X}) \cdot \binom{k}{m(\mathbf{X})}}{\binom{mn}{m(\mathbf{X})}}.
\end{align}

This formula allowed us to perform some computational experiments to understand the behavior of $\mathbb{E}[\mathbf{B}]$ with respect to $m$ and $n$. 
For a fixed size $n$, we observed that $\mathbb{E}[\mathbf{B}]$ depends linearly on $m$. This is illustrated by Figure \ref{fig:linearity}, where the two straight lines (one for each $n$) visually confirm the linear trend and the distinct slopes. 
For a fixed $m$, similar numerical experiments suggest that $\mathbb{E}[\mathbf{B}]$ depends non-linearly on $n$, as one can also notice from the plots in Figure \ref{fig:linearity}.

\begin{figure}[htb]
    \centering
    \includegraphics[width=0.8\textwidth]{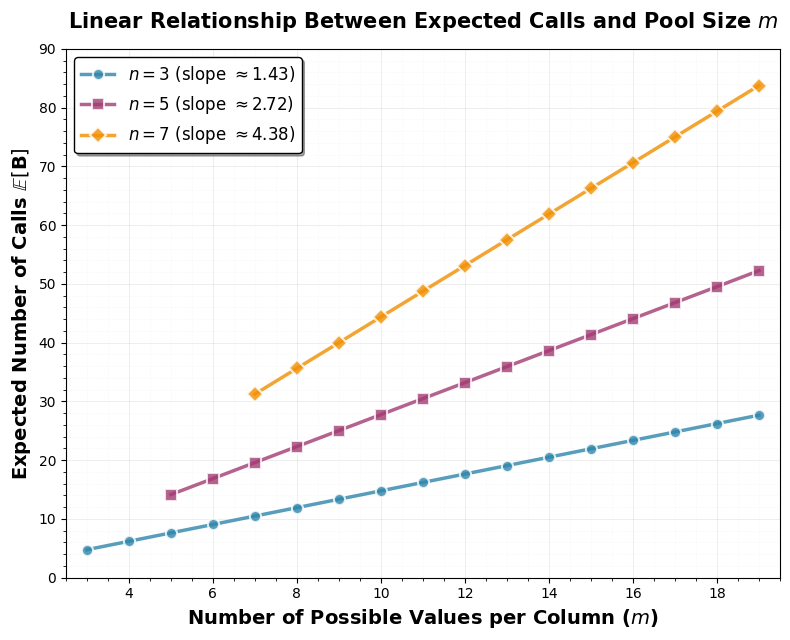}
    \caption{Expected number of calls $\mathbb{E}[\mathbf{B}]$ versus $m$ for $n=5$ and $n=7$.}
    \label{fig:linearity}
\end{figure}

In the next section we prove the linearity property of $\mathbb{E}[\mathbf{B}]$ with respect to $m$.

\pagebreak[4]
\section{Bingo expectation for a single player/card}
We have the following result:



\begin{theorem}
The expected number of calls to complete a $(n,m)$-Bingo card is linear in $m$. More precisely,
\[
\mathbb{E}[\mathbf{B}] = (mn + 1)(1 - S_n)
\]
where
\[
S_n = \sum_{\emptyset \neq \mathbf{X} \subseteq \mathcal{L}}(-1)^{|\mathbf{X}|+1} \cdot \frac{1}{m(\mathbf{X})+1}.
\]
(The alternating sum is over all non-empty subsets of winning patterns.)
\end{theorem}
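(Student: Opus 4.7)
The plan is to simply evaluate the double sum in equation (\ref{eq:expect}) by swapping the order of summation and collapsing the inner $k$-sum in closed form, then recognizing the remaining expression as $(mn+1)(1 - S_n)$. The crucial conceptual observation is that the quantity $m(\mathbf{X})$ depends only on the geometry of the $n\times n$ grid and on which patterns lie in $\mathbf{X}$; it does \emph{not} depend on $m$. So once we extract the $k$-dependence, everything involving $m$ can be pulled out explicitly.

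First I would swap the order of summation in (\ref{eq:expect}), giving
\[
\mathbb{E}[\mathbf{B}] = \sum_{\emptyset \neq \mathbf{X} \subseteq \mathcal{L}} (-1)^{|\mathbf{X}|+1}\,\frac{m(\mathbf{X})}{\binom{mn}{m(\mathbf{X})}} \sum_{k=1}^{mn} \binom{k}{m(\mathbf{X})}.
\]
Next I would apply the hockey-stick identity $\sum_{k=0}^{N}\binom{k}{r} = \binom{N+1}{r+1}$ (the $k=0$ term vanishes since $m(\mathbf{X})\ge 1$) to obtain
\[
\sum_{k=1}^{mn}\binom{k}{m(\mathbf{X})} = \binom{mn+1}{m(\mathbf{X})+1}.
\]
A short factorial manipulation then yields
\[
\frac{\binom{mn+1}{m(\mathbf{X})+1}}{\binom{mn}{m(\mathbf{X})}} = \frac{mn+1}{m(\mathbf{X})+1},
\]
so each inner summand simplifies to $m(\mathbf{X})(mn+1)/(m(\mathbf{X})+1)$.

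At this point I would rewrite $\dfrac{m(\mathbf{X})}{m(\mathbf{X})+1} = 1 - \dfrac{1}{m(\mathbf{X})+1}$ and split the remaining outer sum into two pieces:
\[
\mathbb{E}[\mathbf{B}] = (mn+1)\Bigl[\,\sum_{\emptyset \neq \mathbf{X} \subseteq \mathcal{L}} (-1)^{|\mathbf{X}|+1} \;-\; \sum_{\emptyset \neq \mathbf{X} \subseteq \mathcal{L}} (-1)^{|\mathbf{X}|+1}\,\frac{1}{m(\mathbf{X})+1}\Bigr].
\]
The second bracketed sum is exactly $S_n$, so it only remains to show the first sum equals $1$. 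For this I would use the binomial identity $\sum_{\mathbf{X} \subseteq \mathcal{L}}(-1)^{|\mathbf{X}|} = (1-1)^{|\mathcal{L}|} = 0$: separating the empty set contribution of $+1$ gives $\sum_{\emptyset\neq\mathbf{X}\subseteq\mathcal{L}}(-1)^{|\mathbf{X}|+1} = 1$. Substituting yields $\mathbb{E}[\mathbf{B}] = (mn+1)(1-S_n)$, the desired formula.

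There is essentially no hard obstacle here; the entire argument is a chain of standard binomial manipulations, and the linearity in $m$ emerges automatically once one notes that $m(\mathbf{X})$ is independent of $m$ and that the ratio of binomials in step~3 produces an exact linear factor $mn+1$. The only place one has to be mildly careful is confirming that $m(\mathbf{X})$ indeed refers to a purely geometric cell-count on the $n\times n$ grid (so that $S_n$ genuinely depends only on $n$), which is immediate from the definition given earlier in the excerpt.
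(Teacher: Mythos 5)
Your proposal is correct and follows essentially the same route as the paper's proof: swap the order of summation, collapse the inner sum with the hockey-stick identity, simplify the binomial ratio to the factor $(mn+1)/(m(\mathbf{X})+1)$, and split off $S_n$ using $\sum_{\emptyset\neq\mathbf{X}\subseteq\mathcal{L}}(-1)^{|\mathbf{X}|+1}=1$. The only difference is that you justify that last identity explicitly via $(1-1)^{|\mathcal{L}|}=0$, which the paper simply asserts.
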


\begin{proof}
Starting with the expectation formula \eqref{eq:expect}, we change the order of summation:
\begin{align*}
\mathbb{E}[\mathbf{B}] &= \sum_{k=1}^{mn} \sum_{\emptyset \neq \mathbf{X} \subseteq \mathcal{L}}(-1)^{|\mathbf{X}|+1} \frac{m(\mathbf{X}) \cdot \binom{k}{m(\mathbf{X})}}{\binom{mn}{m(\mathbf{X})}} \\
&= \sum_{\emptyset \neq \mathbf{X} \subseteq \mathcal{L}} \frac{m(\mathbf{X}) \cdot (-1)^{|\mathbf{X}|+1}}{\binom{mn}{m(\mathbf{X})}} \sum_{k=m(\mathbf{X})}^{mn}  \binom{k}{m(\mathbf{X})}.
\end{align*}
Applying the upper‐summation identity $\sum_{k=r}^{n} \binom{k}{r} = \binom{n+1}{r+1}$ (see \cite{GKP:ConcreteMath}), also known as ``hockey-stick identity" to the inner sum, we have:
\begin{align*}
\mathbb{E}[\mathbf{B}] 
    &= \sum_{\emptyset \neq \mathbf{X} \subseteq \mathcal{L}} \frac{(-1)^{|\mathbf{X}|+1} \, m(\mathbf{X})}{\binom{mn}{m(\mathbf{X})}} \cdot \binom{mn+1}{m(\mathbf{X})+1} \\
    &= (mn+1) \sum_{\emptyset \neq \mathbf{X} \subseteq \mathcal{L}} (-1)^{|\mathbf{X}|+1} \cdot \frac{m(\mathbf{X})}{m(\mathbf{X})+1} \\
    &= (mn+1) \left(1 - S_n\right),
\end{align*}
where we used $\frac{m(\mathbf{X})}{m(\mathbf{X})+1} = 1 - \frac{1}{m(\mathbf{X})+1}$ and $\sum_{\emptyset \neq \mathbf{X} \subseteq \mathcal{L}}(-1)^{|\mathbf{X}|+1} = 1$.
One can rewrite
\[
\mathbb{E}[\mathbf{B}] = mn(1-S_n) + (1-S_n)
\]
to show that it is a linear function in $m$ with slope $n(1-S_n)$ and intercept $(1-S_n)$.
\end{proof}
\noindent\textit{Remark.} For fixed $m$, the function is not linear in $n$ since $(1-S_n)$ depends on $n$, which also matches our numerical observations.

\subsection*{Probabilistic Interpretation of $1-S_n$}

To understand the relationship between the size of a Bingo card $n\times n$ and the expected number of calls to complete a Bingo game, we analyze the term $(1-S_n)$. According to Table~\ref{tab:Sn}, the term $(1-S_n)$ increases as $n$ increases, but it remains between 0 and 1.

\begin{table}[h!]
\centering
\begin{tabular}{|c|c|c|}
\hline
$n$ & $S_n$  & $1 - S_n$ \\
\hline
3  & 0.61428571 & 0.38571429 \\
5  & 0.45567666 & 0.54432334 \\
7  & 0.37493088 & 0.62506912 \\
9  & 0.32247144 & 0.67752856 \\
11 & 0.28471901 & 0.71528099 \\
\hline\\
\end{tabular}
\caption{Values of $S_n$ and $1 - S_n$ for various $n$.}
\label{tab:Sn}
\end{table}

Using that
\[
\int_0^1 p^{m(\mathbf{X})}\, dp = \frac{1}{m(\mathbf{X})+1},
\]
we have
\begin{align*}
S_n &= \sum_{\emptyset \neq \mathbf{X} \subseteq \mathcal{L}} (-1)^{|\mathbf{X}|+1} \int_0^1 p^{m(\mathbf{X})}\,dp \\
&= \int_0^1 \left( \sum_{\emptyset \neq \mathbf{X} \subseteq \mathcal{L}} (-1)^{|\mathbf{X}|+1} p^{m(\mathbf{X})} \right) dp.
\end{align*}
We can interpret $p$ as the probability that an arbitrary square is independently``marked''. Then, for each subset $\mathbf{X} \subseteq \mathcal{L}$, the probability that all lines in $X$ are fully marked is $p^{m(X)}$. By the inclusion-exclusion principle, the probability that at least one line is fully marked (i.e., at least one Bingo line) is:
\[
P(p) = \sum_{\emptyset \neq \mathbf{X} \subseteq \mathcal{L}} (-1)^{|\mathbf{X}|+1} p^{m(\mathbf{X})}
\]
So,
\begin{equation*}
S_n = \int_0^1 P(p)\,dp \implies 1-S_n = \int_0^1 [1-P(p)]\,dp
\end{equation*}
$P(p)$ is the probability that at least one line is fully marked, so its complement $1-P(p)$ is the probability that no line is fully marked. Let us call this new probability $Q(p)$. We have $1-S_n = \int_0^1 Q(p)\,dp$ and $ 0 < Q(p) < 1 \text{ for all } p \in (0,1)$. Therefore, $0 < 1-S_n < 1$.

To give perspective,
\begin{align*}
    1-S_n = \int_0^1 Q(p)\,dp = \mathbb{E}[Q(p)]
\end{align*}
which is the expected value of $Q(p)$. 
\begin{align*}
    \mathbb{E}[\mathbf{B}] = (mn+1)(1-S_n) = (mn+1)\mathbb{E}[Q(p)]
\end{align*}
This means that the expected number of turns to complete the Bingo game is given by the product between the
\textit{(Number of available values + 1)} and the \textit{(Average probability that no lines are fully marked with marking probability $p$  per square)}.

\section{Bingo expectation for multpliple players/cards}

A natural attempt to generalize to \(N\) players (or cards) is to assume the independence of the cards. However, this assumption does not hold in practice, as there can be overlaps among winning lines of different cards. We therefore proceed to the correct derivation using the inclusion-exclusion principle.

Consider the scenario with \(N\) players, each with a randomly generated Bingo card. The game ends as soon as at least one player achieves Bingo, which occurs when at least one line (winning pattern) on any card is completed. In other words, in the multiplayer setup, the event “Bingo is achieved” is equivalent to the event that at least one Bingo line is completed across all cards. This is essentially the same as the single-player case, where the game ends when at least one line is completed. This observation allows us to extend the single-player probability framework to the multiplayer setting.

Since each Bingo is randomly generated and contains \(2n+2\) lines (for an \(n \times n\) card with rows, columns, and diagonals), it is possible for some Bingo lines to be identical across different cards. Therefore, in the multiplayer case, we must consider only the set of \emph{unique} lines among all cards, unlike the single-card case where all lines are distinct by construction. To be specific, the uniqueness is determined by the set of numbers in each line.

Let \(\mathcal{L}_N\) denote the set of all unique Bingo lines across all \(N\) Bingo cards. For a given realization of the cards (and hence a fixed \(\mathcal{L}_N\)), suppose that after \(k\) calls, a certain combination \(\mathbf{Y} \subseteq \mathcal{L}_N\) of lines are present. We define \(m(\mathbf{Y})\) as the number of unique numbers in the union of winning patterns in \(\mathbf{Y}\) and \(|\mathbf{Y}|\) as the cardinality of \(\mathbf{Y}\).

We have the following:

\begin{theorem}
For \(N\) players with randomly generated Bingo cards, the expected number of calls to complete a \((n,m)\)-Bingo given a realization of the cards is
\[
\mathbb{E}[\mathbf{B}_N \mid \mathcal{L}_N] = (mn+1)(1-S_{n,N}),
\]
where
\[
S_{n,N} = \sum_{\emptyset \neq \mathbf{Y} \subseteq \mathcal{L}_N} (-1)^{|\mathbf{Y}|+1} \frac{1}{m(\mathbf{Y})+1}.
\]
\end{theorem}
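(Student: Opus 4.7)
The plan is to observe that once we condition on the realization of the cards, the set $\mathcal{L}_N$ of \emph{distinct} winning lines is fixed, and the multiplayer problem reduces to exactly the same combinatorial setup as the single-card problem: a prescribed family of subsets of the $mn$-element pool, with Bingo declared at the first call $k$ such that at least one of these subsets is fully contained in the $k$ called numbers. The single-card derivation in Section 3 never used the fact that the lines came from one card or had any specific geometry---it only used that each line is a subset of the $mn$ possible call values and that $m(\mathbf{X})$ is the cardinality of the union. So the plan is to re-run that derivation verbatim with $\mathcal{L}$ replaced by $\mathcal{L}_N$ and $\mathbf{X}$ replaced by $\mathbf{Y}$.

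Concretely I would carry out the same four steps. First, by inclusion-exclusion applied to the events $\{L \le k\}$ for $L \in \mathcal{L}_N$, write
\[
P(\mathbf{B}_N \le k \mid \mathcal{L}_N) = \sum_{\emptyset \neq \mathbf{Y} \subseteq \mathcal{L}_N}(-1)^{|\mathbf{Y}|+1}\frac{\binom{mn-m(\mathbf{Y})}{k-m(\mathbf{Y})}}{\binom{mn}{k}},
\]
and rewrite each term as $\binom{k}{m(\mathbf{Y})}/\binom{mn}{m(\mathbf{Y})}$ using the same factorial manipulation. Second, difference in $k$ and apply $\binom{k}{r}-\binom{k-1}{r}=\binom{k-1}{r-1}$ to get the conditional PMF
\[
P(\mathbf{B}_N = k \mid \mathcal{L}_N) = \sum_{\emptyset \neq \mathbf{Y} \subseteq \mathcal{L}_N}(-1)^{|\mathbf{Y}|+1}\frac{m(\mathbf{Y})\binom{k}{m(\mathbf{Y})}}{k\binom{mn}{m(\mathbf{Y})}}.
\]
Third, form $\mathbb{E}[\mathbf{B}_N \mid \mathcal{L}_N] = \sum_{k=1}^{mn} k\,P(\mathbf{B}_N=k\mid \mathcal{L}_N)$, swap the order of summation, and use the hockey-stick identity $\sum_{k=r}^{n}\binom{k}{r}=\binom{n+1}{r+1}$ to collapse the inner sum. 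Fourth, after canceling $\binom{mn+1}{m(\mathbf{Y})+1}/\binom{mn}{m(\mathbf{Y})} = (mn+1)/(m(\mathbf{Y})+1)$, split $m(\mathbf{Y})/(m(\mathbf{Y})+1)=1-1/(m(\mathbf{Y})+1)$ and use $\sum_{\emptyset \neq \mathbf{Y}}(-1)^{|\mathbf{Y}|+1}=1$ to arrive at $(mn+1)(1-S_{n,N})$.

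There is no serious obstacle, only one point to verify carefully: that inclusion-exclusion is applied over the set of \emph{distinct} lines $\mathcal{L}_N$, so that each $\mathbf{Y}$ is a genuine subset and $m(\mathbf{Y})$ is the size of the union of distinct patterns. This is precisely why the theorem is phrased in terms of unique lines---if duplicated lines were kept, the alternating signs would overcount in a way that destroys the identity $\sum_{\emptyset \neq \mathbf{Y}}(-1)^{|\mathbf{Y}|+1}=1$ as a probabilistic normalization. Once uniqueness is built into $\mathcal{L}_N$, every step of the single-card argument transcribes without change, and the random quantity $S_{n,N}$ (random through $\mathcal{L}_N$) plays exactly the role $S_n$ played before.
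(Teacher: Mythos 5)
Your proposal is correct and follows essentially the same route as the paper's own proof: condition on the card realization, apply inclusion-exclusion over the set of distinct lines $\mathcal{L}_N$, and transcribe the single-card CDF/PMF/hockey-stick computation with $\mathbf{Y}$ in place of $\mathbf{X}$. Your remark about why uniqueness of the lines is needed is a point the paper makes in its setup rather than in the proof, but it is the same underlying observation.
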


\begin{proof}
Conditioning on the card configuration (i.e., on \(\mathcal{L}_N\)), the cumulative distribution function (CDF) for the number of calls required for at least one Bingo line to be completed, denoted \(\mathbf{B}_N\), is:
\begin{align*}
P(\mathbf{B}_N \leq k \mid \mathcal{L}_N) 
&= P\Bigl(\bigcup_{L \in \mathcal{L}_N} (L\leq k) \,\Big|\, \mathcal{L}_N\Bigr) \\
&= \sum_{\emptyset \neq \mathbf{Y} \subseteq \mathcal{L}_N} (-1)^{|\mathbf{Y}|+1} 
   P\Bigl(\bigcap_{L \in \mathbf{Y}} (L \leq k)\,\Big|\, \mathcal{L}_N\Bigr) \\
&= \sum_{\emptyset \neq \mathbf{Y} \subseteq \mathcal{L}_N} (-1)^{|\mathbf{Y}|+1} 
   \frac{\binom{k}{m(\mathbf{Y})}}{\binom{mn}{m(\mathbf{Y})}},
\end{align*}
where the probability term is the probability that all the squares in the union of the lines in \(\mathbf{Y}\) are among the \(k\) called numbers.

As demonstrated in the single-player case, the conditional PMF and expectation are similarly derived as follows:
\begin{align*}
P(\mathbf{B}_N = k \mid \mathcal{L}_N) 
&= \sum_{\emptyset \neq \mathbf{Y} \subseteq \mathcal{L}_N} (-1)^{|\mathbf{Y}|+1} 
   \frac{\binom{k-1}{m(\mathbf{Y})-1}}{\binom{mn}{m(\mathbf{Y})}},
\\
\mathbb{E}[\mathbf{B}_N \mid \mathcal{L}_N] 
&= (mn+1) \sum_{\emptyset \neq \mathbf{Y} \subseteq \mathcal{L}_N} (-1)^{|\mathbf{Y}|+1} 
   \frac{m(\mathbf{Y})}{m(\mathbf{Y})+1}\\
&= (mn+1)\bigl(1-S_{n,N}\bigr)
\end{align*}
where
\[
S_{n,N} = \sum_{\emptyset \neq \mathbf{Y} \subseteq \mathcal{L}_N} (-1)^{|\mathbf{Y}|+1} 
   \frac{1}{m(\mathbf{Y})+1}.\qedhere
\]
\end{proof}

\noindent{Remark.} According to Theorem 2, for any fixed card configuration, \(S_{n,N}\) depends only on the card size \(n\) and the specific set of unique lines \(\mathcal{L}_N\), but is \emph{independent} of \(m\), the number of available numbers in each column. Therefore, for each card configuration, \(\mathbb{E}[\mathbf{B}_N \mid \mathcal{L}_N]\) is a linear function of \(m\) with slope \(n(1-S_{n,N})\) and intercept \((1-S_{n,N})\). In other words, in the multiplayer Bingo game, the expected number of calls until the first Bingo is achieved grows linearly with the column range \(m\), given that the cards are fixed.

\subsection*{Numerical validation} To validate Theorem 2, we compared the analytical formula $\mathbb{E}[\mathbf{B}_N \mid \mathcal{L}_N] = (mn+1)(1-S_{n,N})$ with direct Monte Carlo simulations. For randomly generated card configurations, we simulated complete games by drawing numbers until at least one card achieved Bingo, then compared the average number of calls with the theoretical prediction. Table~\ref{tab:validation} presents the results. The analytical predictions match the simulation results with remarkable precision, 
with relative error 
consistently below 0.05\% for a sample size of 100,000 trials. 

\begin{table}[htb]
\centering
\begin{tabular}{|c|c|c|c|c|c|c|}
\hline
$n$ & $m$ & $N$ & Trials & Simulation & Formula & Rel. Error \\
\hline
3 & 5 & 2 & 100,000 & 6.5176 & 6.5181 & 0.01\% \\
3 & 5 & 3 & 100,000 & 5.9426 & 5.9441 & 0.02\% \\
3 & 7 & 2 & 100,000 & 8.7573 & 8.7546 & 0.03\% \\
5 & 7 & 2 & 100,000 & 18.6488 & 18.6406 & 0.04\% \\
\hline\\
\end{tabular}
\caption{Comparison of simulated and theoretical expected number of calls for various $(n,m,N)$ configurations.}
\label{tab:validation}
\end{table}
\subsection*{Probabilistic Interpretation of $1-S_{n,N}$}
Similarly to the argument from single player game, one can derive the formula
\begin{align*}
    1-S_{n,N} = \int_0^1 Q_N(p)\,dp = \mathbb{E}[Q_N(p)]
\end{align*}
where $Q_N(p)$ is the probability that no line is fully marked across $N$ cards. Thus,
\begin{align*}
    \mathbb{E}[\mathbf{B}_N \mid \mathcal{L}_N] = (mn+1)(1-S_{n,N}) = (mn+1)\mathbb{E}[Q_N(p)],
\end{align*}
with the same interpretation as in the single-player game.

\section{Remarks and Future Directions}

We have established that for a fixed set of $N$ $(n,m)$-Bingo cards, the expected number of calls is linear in $m$ and it is given by the explicit formula $\mathbb{E}[\mathbf{B}_N \mid \mathcal{L}_N] = (mn+1)(1-S_{n,N})$. 

Beyond traditional Bingo, this framework applies to other Bingo variations, where winning patterns are clearly defined and the procedure for calling and marking numbers is specified. For example, in “Four Corners” Bingo, a win is achieved when the four corner numbers are marked. Across multiple cards, the only change in the formula from Theorem 2 is the set of winning patterns, which now consists of the unique lists of four corners corresponding to each card. 


Several questions remain open for future investigation:

\begin{itemize}
  \item \textbf{Asymptotic behavior of \(S_n\)} as \(n \to \infty\): numerical computations suggest that \(S_n\) decreases to $0$, but a more rigorous asymptotic analysis might be useful.
  \item \textbf{Variance analysis:} while we have derived a formula for the expected value of the game length, understanding the variance would provide insights into its unpredictability and possible fluctuations around the mean.
  \item \textbf{Equivalent formulas or fast approximations for expectation:} it would be interesting to find other closed-form relations or faster approximations of \(\mathbb{E}[\mathbf{B}]\). These would speed up numerical explorations and provide other analytic insights.
\end{itemize}

\bibliographystyle{plain}
\bibliography{references.bib}
\end{document}